\newcommand{\littleo}[1]{o\mathopen{}\left(#1\right)}
\newcommand{\ome}[1]{\Omega\mathopen{}\left(#1\right)}
\newcommand{\abs}[1]{\left\lvert#1\right\rvert}
\newcommand{\dyck}{\mathrm{DYCK}}
\newcommand{\col}{\mathrm{COL}}
\newcommand{\sst}[2]{\left\{#1\,:\,#2\right\}}
\renewcommand{\le}{\leqslant}
\renewcommand{\ge}{\geqslant}
\newtheorem{theorem}{Theorem}
\newtheorem{lemma}[theorem]{Lemma}
\newtheorem{claim}{Assertion}
\newtheorem{corollary}[claim]{Corollary}
\def\C{{\mathcal C}}
\def\W{{\mathcal W}}
\def\O{{\mathcal O}}
\def\R{{\mathcal R}}
\def\FinalConstant{\frac9{2^{5/3}}}
\def\FinalConstantNum{{2.83483}}
\def\KappaConstant{\frac{2^{5/3}}3}
\begin{document}
\title{A note on acyclic vertex-colorings} \date{}

\author{Jean-Sébastien Sereni
  \thanks{\emph{Centre National de la Recherche
  Scientifique} (LORIA), Vandœuvre-lès-Nancy, France. E-mail: \texttt{sereni@kam.mff.cuni.cz}.
  This author's work was partially
  supported by the French \emph{Agence Nationale de la Recherche} under reference
  \textsc{anr 10 jcjc 0204 01}.
  }
 \and Jan Volec
  \thanks{Mathematics Institute and DIMAP, University of Warwick, Coventry CV4 7AL, UK. E-mail:
  \texttt{honza@ucw.cz}. This author's work was supported by a grant of the French Government.}
}

\maketitle
\begin{abstract}
  \noindent We prove that the acyclic chromatic number of a graph
  with maximum degree $\Delta$ is less than $2.835\Delta^{4/3}+\Delta$.
  This improves the previous upper bound, which
  was $50\Delta^{4/3}$. To do so, we draw inspiration from
  works by Alon, McDiarmid and Reed and by Esperet and Parreau.
\end{abstract}

\section{Introduction}
In 1973, Gr\"ubaum~\cite{Gru73} considered proper colorings of graphs with an additional
constraint: the subgraph induced by every pair of color classes is required
to be acyclic. Such colourings are coined \emph{acyclic colorings} and
the least integer $k$ such that a graph $G$ admits an acyclic coloring with
$k$ colors is the \emph{acyclic chromatic number $\chi_a(G)$} of $G$.

Three years later, Erd\H{o}s (see~\cite{AlBer76}) raised the question of determining
the maximum possible value of $\chi_a(G)$ over all graphs $G$ with maximum
degree $\Delta$. Let $\chi_a(\Delta)$ be this value.
A first indication is given by the following observation:
for every graph $G$, any proper coloring of $G^2$ is an acyclic coloring of
$G$. Therefore, $\chi_a(\Delta)\le\Delta^2+1$.
However, Erd\H{o}s conjectured a stronger statement, namely that
$\chi_a(\Delta)=\littleo{\Delta^2}$ as $\Delta$ tends to infinity.

This conjecture was confirmed about a quarter century later, by Alon,
McDiarmid and Reed~\cite{AMR91}. Relying on the Lov\'asz Local
Lemma~\cite{ErLo75}, they established the following upper bound.
\begin{theorem}[Alon, McDiarmid \& Reed~\cite{AMR91}]
   For every positive integer $\Delta$,
   \[
   \chi_a(\Delta)\le50\Delta^{4/3}.
   \]
\end{theorem}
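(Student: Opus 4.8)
The plan is to follow Alon, McDiarmid and Reed and argue through the Lovász Local Lemma. Colour each vertex of $G$ independently and uniformly at random with one of $k=\lceil c\Delta^{4/3}\rceil$ colours, where $c$ is a constant to be fixed (aiming at $c\le 50$). A colouring fails to be acyclic exactly when one of two \emph{bad} configurations appears: a \emph{monochromatic edge}, whose endpoints share a colour, or a \emph{bichromatic cycle}, an even cycle whose vertices use only two colours in alternation. For every edge $e$ let $A_e$ be the event that $e$ is monochromatic, and for every cycle $C$ of length $2\ell$ let $B_C$ be the event that $C$ is bichromatic. Since avoiding every $A_e$ and every $B_C$ produces an acyclic $k$-colouring, it suffices to prove that with positive probability none of these events occurs.

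I would then assemble the two ingredients the Lemma needs. Because the colours are independent and uniform, $\Pr[A_e]=1/k$, and, since a bichromatic $2\ell$-cycle is fixed by an ordered pair of distinct colours, $\Pr[B_C]=k(k-1)k^{-2\ell}\le k^{2-2\ell}$. For the dependencies, two events are independent as soon as their vertex sets are disjoint; a fixed vertex lies on at most $\Delta$ edges and on at most $\Delta^{2\ell-1}$ cycles of length $2\ell$, so each event is independent of all others except those meeting one of its (at most $2\ell$) vertices. I would invoke the asymmetric form of the Lemma, giving weight $x$ to every $A_e$ and a weight $y_\ell$ to every $B_C$ with $|C|=2\ell$, the $y_\ell$ decaying geometrically in $\ell$ at roughly the rate of $\Pr[B_C]$. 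Using $1-t\ge e^{-2t}$, verifying the Local Lemma inequalities reduces to keeping a single sum of the shape $\Delta x+\sum_{\ell\ge 2}\Delta^{2\ell-1}y_\ell$ below an absolute constant; the cycle part is a geometric series in $\Delta^2/k^2$, which converges comfortably once $k\gg\Delta$, and optimising the weights against this series is what pins down the admissible value of $c$.

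The main obstacle, and the reason the exponent is $4/3$, lies entirely in the shortest cycles. A single vertex can sit on as many as $\Delta^3$ four-cycles, each bichromatic with probability about $k^{-2}$, so the $\ell=2$ term of the controlling sum is of order $\Delta^3/k^2$; this stays bounded only when $k\gtrsim\Delta^{3/2}$, which is weaker than the target and is indeed all a blunt union over cycles can deliver. Reaching $\Delta^{4/3}$ therefore forces a finer treatment of bichromatic four-cycles (and the other short even cycles): the strategy is to first apply the Lemma to secure a proper colouring in which every neighbourhood carries only few repeated colours, thereby sharply curbing the number of four-cycles that can actually be bichromatic, and only then to suppress the surviving short cycles together with the geometric tail coming from the long ones. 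Carrying out this two-layer estimate and balancing the short-cycle contribution against that tail is the delicate heart of the argument, and it is precisely this balancing that yields the numerical constant $50$.
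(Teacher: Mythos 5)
Your first two paragraphs set up the Alon--McDiarmid--Reed argument correctly (random $k$-colouring, events $A_e$ and $B_C$, the probability bounds $1/k$ and $k^{2-2\ell}$, the dependency counts, the asymmetric Local Lemma), and you correctly diagnose that everything except the $4$-cycles works already at $k=c\Delta^{4/3}$: the $\ell=2$ term $\Delta^3/k^2=\Delta^{1/3}/c^2$ is the sole obstruction. But the idea that actually overcomes this obstruction is missing, and the substitute you propose would not work. The device of~\cite{AMR91} is to classify pairs of vertices at distance two by their number of common neighbours: call $\{u,w\}$ \emph{special} if $u$ and $w$ have at least $\Delta^{2/3}$ common neighbours, and add, \emph{within the same single application of the Local Lemma}, a third family of bad events ``$c(u)=c(w)$ for a special pair $\{u,w\}$''. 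A vertex lies in at most $\Delta^2/\Delta^{2/3}=\Delta^{4/3}$ special pairs (each one absorbs at least $\Delta^{2/3}$ of the at most $\Delta(\Delta-1)$ paths of length two from $v$), and each such event has probability $1/k$, so this family contributes $O(1)$ to the controlling sum. One then only needs cycle events for those $4$-cycles neither of whose diagonals is special; such a $4$-cycle through $v$ is determined by a path $v,a,w$ ($\le\Delta^2$ choices) and a second common neighbour of $v$ and $w$ ($<\Delta^{2/3}$ choices, by non-specialness), so there are at most $\Delta^{8/3}$ of them through $v$, and $\Delta^{8/3}\cdot k^{-2}=O(1)$. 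Any bichromatic $4$-cycle has monochromatic diagonals, so forbidding both families kills all bichromatic $4$-cycles, and the constant $50$ comes out of balancing these finitely many $O(1)$ contributions.

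Your proposed two-layer scheme --- ``first apply the Lemma to secure a proper colouring in which every neighbourhood carries only few repeated colours \dots and only then to suppress the surviving short cycles'' --- fails for two reasons. A second round of the Local Lemma either re-randomizes the colours (destroying the first round's guarantee) or conditions on the first colouring (destroying the mutual independence the Lemma requires); and ``few repeated colours in every neighbourhood'' is not a low-probability local event but a concentration statement (the expected number of monochromatic pairs in a neighbourhood is already of order $\Delta^{2/3}$), so it cannot be fed to the Lemma as a bad event of the required type. Everything has to happen in one application, with the special-pair events doing the work. Note finally that the paper you are reading does not prove this theorem at all --- it quotes it from~\cite{AMR91} and proves a stronger bound by entropy compression rather than by the Local Lemma --- but the set $D(v)$ of ``dangerous'' vertices defined in the proof of Theorem~\ref{th-main}, namely the vertices with at least $\kappa\Delta^{2/3}$ common neighbours with $v$, is exactly the device you are missing, together with the bound $\abs{c[D(v)]}\le(\Delta^{4/3}-\Delta^{1/3})/\kappa$ and the count of Assertion~\ref{cl:cyclecount}.
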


This upper bound, more than confirming Erd\H{o}s's conjecture, turns out
to be of order very close to that of $\chi_a(\Delta)$.
Indeed, Alon, McDiarmid and Reed~\cite{AMR91} further proved that
\[\chi_a(\Delta)=\ome{\frac{\Delta^{4/3}}{(\log\Delta)^{1/3}}}.\]

Our goal is to exploit the recent advances regarding algorithmic versions of
the Local Lemma, inspired by the incompressibility arguments. In 2009
Moser~\cite{Mos09} and, in 2010, Moser and
Tardos~\cite{MoTa10} designed strong algorithmic versions of the
Local Lemma. More importantly for our purposes, while preparing
his talk for the Symposium on Theory of Computing, Moser found
a simpler proof of his result from 2009. The technique
used in this proof became known as the ``entropy compression'' argument;
the reader is referred to Fortnow's website~\cite{For} and Tao's blog~\cite{Tao} for more details.

Independently, Schweitzer~\cite{Sch09} pursued a similar line of research,
explaining how to obtain constructive bounds on van der Warden numbers. His
work was subsequently improved by Kulich and Keme\v{n}ov\'a~\cite{KuKe11} to
precisely match the known non-constructive results.

All these ideas inspired new adaptations and more efficient uses of the essence of the Local Lemma
to tackle various combinatorial questions, in particular graph
colouring problems~\cite{DJKW,EsPa13,GKM13, Prz12, PrzSchSk13} and problems
related to pattern avoidance~\cite{OchePin13}.
We draw inspiration from the original work of Alon, McDiarmid \&
Reed~\cite{AMR91} and a recent result of Esperet \& Parreau~\cite{EsPa13} to
establish the following upper bound.
\[
\chi_a(\Delta) \le \FinalConstant\cdot\Delta^{4/3} + \Delta <
\FinalConstantNum \cdot \Delta^{4/3} + \Delta.
\]

\section{Proof of the Upper Bound}
We shall use certain standard estimates on the number of Dyck words with all
descent of even lengths. A \emph{partial Dyck word} is a bit string
$w$ such that no prefix of $w$ contains more ones than zeros. A \emph{Dyck
word} is a partial Dyck word of length $2t$ with exactly $t$ zeros. A
\emph{descent} in a partial Dyck word is a maximal sequence of consecutive
ones.

The following lemma is a special case of~\cite[Lemmas 7 and 8]{EsPa13} for Dyck words
with all descents of even length.
It follows from~a folklore bijection between Dyck words and
plane trees, and the asymptotic results for counting such trees; see, e.g., \cite[Theorem 5]{Drmo04}.
More details are found in the work of Esperet and Parreau~\cite{EsPa13}.
\begin{lemma}
\label{lem:dyck}
There exists an absolute constant $C_{\dyck}$ such that the number of Dyck words of
length $2t$ with all descents of even length is at most
\[
C_\dyck\cdot \frac{\left(3\sqrt{3}/2\right)^t}{t^{3/2}}.
\]
\end{lemma}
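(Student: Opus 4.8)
The plan is to reduce the enumeration to a classical one–dimensional lattice–path (equivalently, plane–tree) count and then read off the claimed order of growth from the dominant singularity of the associated generating function. First I would record the trivial parity observation: a Dyck word with all descents even has an even number of ones, so $f_t = 0$ when $t$ is odd and the bound holds vacuously there; assume $t = 2k$. The key structural step is a bijection. Since every maximal descent has even length, I split each maximal run $D^{2m}$ of ones canonically into $m$ atomic blocks $DD$; reading each zero as a $+1$ step and each block $DD$ as a $-2$ step sends an even-descent Dyck word of semilength $t$ to a nonnegative lattice path from $0$ to $0$ with $t$ up-steps $+1$ and $k$ down-steps $-2$. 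Maximality of the runs is what makes this map reversible: two adjacent even runs would merge into one, but in a Dyck word they are always separated by at least one zero, so the block decomposition is unique. This is exactly the \emph{folklore} encoding of plane trees by lattice paths, and the resulting paths are equinumerous with ternary trees having $k$ internal nodes.

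Next I would set up the generating function $y(x) = \sum_t f_t x^t$, where $f_t$ is the quantity to be bounded. Either by the first-return (kernel) decomposition of the $\{+1,-2\}$-paths, or directly from the ternary-tree grammar, one obtains the algebraic equation
\[
y = 1 + x^2 y^3 ,
\]
so that $f_{2k}$ is the Fuss–Catalan number $\tfrac{1}{2k+1}\binom{3k}{k}$ while $f_t = 0$ for odd $t$. A quick sanity check: $f_2 = 1$, counting only $UUDD$, and $f_4 = 3$.

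The asymptotic step is then standard. Writing $X = x^2$, the function $T(X) = 1 + X T^3$ has a square-root (algebraic branch-point) singularity at $X = \tfrac{4}{27}$, arising from the simultaneous vanishing of the defining equation and of its derivative in $T$. Hence $y(x)$ has its dominant singularities at $x = \pm 2\cdot 3^{-3/2}$, both of modulus $\rho = 2\cdot 3^{-3/2}$ and each of square-root type. The transfer theorem for such singularities — which is precisely the content of the simply-generated-tree asymptotics invoked as \cite[Theorem 5]{Drmo04}, and equivalently a direct application of Stirling's formula to the Fuss–Catalan numbers — yields
\[
f_{2k} \sim c\cdot \left(\tfrac{27}{4}\right)^{k} k^{-3/2}
\]
for an explicit constant $c > 0$. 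Substituting $k = t/2$ and using $\sqrt{27} = 3\sqrt{3}$ converts this to $f_t \sim c'\cdot (3\sqrt{3}/2)^{t}\, t^{-3/2}$, matching the target order exactly.

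Finally, to pass from an asymptotic equivalence to a uniform bound with an absolute constant $C_\dyck$, I would note that the ratio $f_t \big/ \bigl((3\sqrt{3}/2)^t\, t^{-3/2}\bigr)$ is bounded: it converges along the even integers by the above, it vanishes along the odd integers, and only finitely many initial terms remain, so its supremum over $t \ge 1$ is finite and may be taken as $C_\dyck$. The main point to get right is the structural bijection — in particular verifying that maximality of the descents makes the block-splitting reversible — together with confirming that the branch point at $x = \rho$ (and its parity partner $-\rho$) is the unique dominant singularity and is genuinely of square-root type, since it is this that forces the subexponential factor $t^{-3/2}$ and hence the stated bound.
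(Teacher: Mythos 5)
Your proposal is correct and follows essentially the same route the paper indicates for this lemma (which it does not prove in detail but attributes to the folklore Dyck-word/tree bijection plus the asymptotics of Esperet--Parreau and Drmota): encode the even-descent Dyck words as a simply generated tree family and read off the $\gamma^t t^{-3/2}$ growth from the square-root singularity. Your specific instantiation — splitting each even descent into $DD$-blocks to get $\{+1,-2\}$-paths, hence the exact Fuss--Catalan count $\frac{1}{2k+1}\binom{3k}{k}$ with $t=2k$ — is a clean and verifiably correct way to do this (indeed $f_2=1$, $f_4=3$), and it even lets one finish by Stirling instead of transfer theorems.
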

\noindent
We also recall a special case of~\cite[Lemma 6]{EsPa13}.
\begin{lemma}
\label{lem:p_dyck}
Let $r$ be a non-negative integer. The number of partial Dyck words with
exactly $t$ zeros, exactly $(t-r)$ ones, and all descents of even length is at most
\[
C_\dyck\cdot \frac{\left(3\sqrt{3}/2\right)^{t+r}}{(t+r)^{3/2}}.
\]
\end{lemma}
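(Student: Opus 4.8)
The plan is to exhibit, for each fixed $r$, an injection from the set of partial Dyck words with $t$ zeros, $(t-r)$ ones, and all descents of even length into the set of (full) Dyck words of length $2(t+r)$ with all descents of even length, and then to invoke Lemma~\ref{lem:dyck} with half-length $t+r$. Reading each zero as a $+1$ step and each one as a $-1$ step, such a partial Dyck word $w$ is a lattice path that stays non-negative and ends at height $r$, since it has exactly $r$ more zeros than ones. A natural idea is to append a short canonical suffix that brings the path back to height $0$; the subtlety is to do so while (i) keeping all descents even and (ii) landing in words of the prescribed length $2(t+r)$, so that the bound of Lemma~\ref{lem:dyck} applies with the correct exponent.

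First I would define the completion map $w \mapsto w\cdot(011)^{r}$, appending $r$ consecutive copies of the block $011$. Each block contributes one zero and two ones, so the resulting word has $t+r$ zeros and $(t-r)+2r = t+r$ ones; it is therefore balanced, of length exactly $2(t+r)$, as required. Next I would check that the image is genuinely a Dyck word. Starting at height $r$, the $i$-th block takes the height from $r-i+1$ up to $r-i+2$ and then down to $r-i$, so the path never drops below $0$ and reaches $0$ precisely after the last block. Hence it remains non-negative throughout and ends at height $0$.

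The key point is that this completion preserves the even-descent property. Inside the suffix the ones occur only in isolated pairs $11$, each separated from the next by a zero, so every descent created within the suffix has length $2$. Moreover, since the suffix begins with a zero, it cannot prolong the final run of ones of $w$: whatever the last descent of $w$ was, it is terminated by that leading zero and no descent of $w$ merges with one of the suffix. Thus all descents of $w\cdot(011)^{r}$ are even. Because $(011)^{r}$ is a fixed string depending only on $r$, the map is injective, one recovering $w$ by deleting the last $3r$ symbols, and therefore the number of partial Dyck words in question is at most the number of Dyck words of length $2(t+r)$ with all descents even. Applying Lemma~\ref{lem:dyck} with $t$ replaced by $t+r$ gives the claimed bound $C_\dyck\cdot(3\sqrt3/2)^{t+r}/(t+r)^{3/2}$; the degenerate cases $r=0$ (empty suffix, so the statement reduces to Lemma~\ref{lem:dyck}) and $t<r$ (no such words exist) are immediate.

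The main obstacle, and the step deserving the most care, is engineering the gadget so that requirements (i) and (ii) hold at once. A more naive completion, such as appending $r$ ones to descend directly to height $0$, produces a word of length only $2t$ and risks merging with the tail of $w$ to form an odd final descent; this would both violate the even-descent hypothesis needed to apply Lemma~\ref{lem:dyck} and yield the wrong exponent. Choosing the block $011$, which begins with a zero to block any merge and contributes exactly one even descent while netting a drop of one, is precisely what reconciles the length bookkeeping with the even-descent invariant.
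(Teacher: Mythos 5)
Your argument is correct, and it fills in a proof that the paper itself does not give: the paper simply quotes this statement as a special case of \cite[Lemma 6]{EsPa13} and defers all details to that reference. Your completion map $w\mapsto w\cdot(011)^{r}$ is exactly the right kind of gadget, and all the checks go through: the image has $t+r$ zeros and $(t-r)+2r=t+r$ ones, the height after the $i$-th appended block is $r-i\ge 0$ so the Dyck property is preserved, the leading $0$ of the suffix prevents any merge with the final descent of $w$ while every descent created inside the suffix has length exactly $2$, and stripping the fixed suffix of length $3r$ gives injectivity for each fixed $r$. Reducing to Lemma~\ref{lem:dyck} with half-length $t+r$ then yields precisely the stated bound, and this is essentially the same completion idea used in the cited source (where the gadget must be adapted to whatever descent-length constraint is imposed; for even descents your block $011$ is the natural choice). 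The only cosmetic caveat is the degenerate case $t=r=0$, where the stated bound reads $C_\dyck/0^{3/2}$; this is an artefact of the lemma's formulation rather than of your proof, since the paper only invokes the lemma for $t\ge 1$.
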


We are now ready to present our main result.
\begin{theorem}\label{th-main}
Fix a positive integer $\Delta$ and a real $\kappa$ such that $\kappa \ge 2/\Delta^{2/3}$.
If $G$ is a graph with maximum degree $\Delta$,
then the acyclic chromatic number $\chi_a(G)$ is at most
\[
f(\Delta,\kappa) \coloneqq \left(\frac1\kappa +
\frac32\sqrt{\frac{3\kappa}{2}} \right)\Delta^{4/3} + \Delta -
\frac{\Delta^{1/3}}{\kappa}.
\]
In particular, if $\Delta \ge 3$ and $\kappa=\KappaConstant$, it follows that
$\chi_a(G) \le \FinalConstant\cdot\Delta^{4/3} + \Delta < \FinalConstantNum \cdot \Delta^{4/3} + \Delta$.
\end{theorem}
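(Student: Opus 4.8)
The plan is to establish the functional bound $\chi_a(G)\le f(\Delta,\kappa)$ by an entropy-compression argument in the spirit of Moser--Tardos and of Esperet \& Parreau, after which the displayed numerical bound follows by specializing $\kappa$ and discarding a negative term. Set $k\coloneqq\lceil f(\Delta,\kappa)\rceil$ and $\ell\coloneqq\kappa\Delta^{2/3}$ (so that $\ell\ge 2$ is precisely the hypothesis $\kappa\ge 2/\Delta^{2/3}$), and design a randomized sequential recolouring procedure keeping as an invariant a partial colouring of $G$ that is both proper and acyclic. I would process vertices in a fixed order, always (re)colouring the least uncoloured vertex $v$ with a colour drawn uniformly from a palette $P(v)\subseteq[k]$, where from $[k]$ I delete the colours of the neighbours of $v$ (enforcing properness; at most $\Delta$ colours) together with a forbidden set $F(v)$ consisting of every colour that is the colour of at least $\ell$ of the (at most $\Delta(\Delta-1)$) length-two paths starting at $v$. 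An averaging bound gives $\abs{F(v)}\le\Delta(\Delta-1)/\ell$, so $\abs{P(v)}\ge m$ for a uniform lower bound $m$, and each colouring step consumes at least $\log_2 m$ bits of randomness.

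The repair step is where the two stated lemmas enter. After colouring $v$, any new bichromatic cycle passes through $v$, and since the colouring stays proper it alternates between two colours and hence has even length $2t$. If such a cycle appears, I would delete the colours of all but two consecutive of its vertices — an even number, $2t-2$ — and append a failure record to a transcript. Writing a $0$ for each successful colouring and a $1$ for each uncolouring, the transcript is a partial Dyck word whose maximal runs of $1$'s are exactly the repair events, and therefore all have even length. Thus the number of transcripts with $t$ colourings and $t-r$ uncolourings is governed by Lemma~\ref{lem:p_dyck} (with Lemma~\ref{lem:dyck} as the balanced case $r=0$), contributing the growth factor $(3\sqrt3/2)^{t+r}$.

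The core is an injectivity/counting comparison. Running the procedure for $t$ colouring steps, I would argue that the random choices are recoverable from the pair (transcript, final partial colouring): read backwards, a $0$ is undone by reading off and removing the colour of the last-coloured vertex (its index in the then-known $P(v)$ recovers the consumed $\log_2 m$ bits), while a block of $1$'s is undone by reconstructing the deleted bichromatic path from the two retained consecutive vertices — this is the step at which each failure must carry bounded side information naming the successive path vertices. Hence $m^{t}$ is at most the number of decorated transcripts times the number $(k+1)^{n}$ of final states; feeding Lemma~\ref{lem:p_dyck} in and summing the resulting geometric series in $r$, the right-hand side grows like $\bigl(\tfrac{3\sqrt3}{2}\,D\bigr)^{t}$, where $D$ is the per-uncolouring branching of the reconstruction. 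Whenever $m>\tfrac{3\sqrt3}{2}D$ this fails for large $t$, forcing the procedure to terminate with a proper acyclic $k$-colouring of $G$.

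The main obstacle, and the step producing the exponent $4/3$ and the precise constant, is to show that the restriction $F(v)$ caps the reconstruction branching at $D\approx\Delta\sqrt{\ell/2}$ — as opposed to the $\approx\Delta^{3/2}$ one incurs with no restriction, which only yields the weaker bound $k\sim\Delta^{3/2}$. The termination threshold $m>\tfrac{3\sqrt3}{2}D$ then demands a working palette of size $\approx\tfrac32\sqrt{3\ell/2}\,\Delta$, so that $k\approx\Delta+\Delta(\Delta-1)/\ell+\tfrac32\sqrt{3\ell/2}\,\Delta$; re-expressing $\ell=\kappa\Delta^{2/3}$ turns this into exactly $\bigl(\tfrac1\kappa+\tfrac32\sqrt{3\kappa/2}\bigr)\Delta^{4/3}+\Delta-\Delta^{1/3}/\kappa=f(\Delta,\kappa)$, the tension between the two middle terms forcing the $\Delta^{4/3}$ scaling. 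Finally, for the numerical statement I would check that $\kappa=\KappaConstant$ satisfies $\kappa\ge 2/\Delta^{2/3}$ once $\Delta\ge 3$, verify in one line that this value minimizes $\tfrac1\kappa+\tfrac32\sqrt{3\kappa/2}$ with minimum $\FinalConstant$, and drop the negative term $-\Delta^{1/3}/\kappa$, leaving $\chi_a(G)\le\FinalConstant\cdot\Delta^{4/3}+\Delta<\FinalConstantNum\cdot\Delta^{4/3}+\Delta$.
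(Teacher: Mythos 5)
Your overall architecture is exactly that of the paper: colour the least uncoloured vertex from a restricted palette of size $\frac32\sqrt{3\kappa/2}\,\Delta^{4/3}$ (after deleting at most $\Delta$ neighbour-colours and at most $\Delta(\Delta-1)/\ell$ further colours, $\ell=\kappa\Delta^{2/3}$), repair bichromatic cycles by uncolouring all but two consecutive vertices, encode the run as a partial Dyck word with even descents plus an index for each destroyed cycle, invoke Lemmas~\ref{lem:dyck} and~\ref{lem:p_dyck}, and compare $\left(\frac32\sqrt{3\kappa/2}\,\Delta^{4/3}\right)^t$ against $\left(\frac{3\sqrt3}{2}\right)^t t^{-3/2} D^t$ with $D=\Delta^{4/3}\sqrt{\kappa/2}$; the closing optimisation of $\kappa$ is also the paper's. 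However, the one step you defer as ``the main obstacle'' — that the palette restriction caps the reconstruction branching at $D$ — is precisely where your choice of restriction breaks down. You forbid the colours that currently appear at the far end of at least $\ell$ length-two paths from $v$. This is a \emph{dynamic, colour-based} condition, and while it does bound the number of bichromatic $2k$-cycles through $v$ at the moment $v$ is coloured, the decoder cannot exploit it: to enumerate the candidate cycles it would need to know which $2$-paths from $v$ end in the colour $x$, but $x$ is exactly the random choice being recovered, and the colours of the uncoloured vertices $w_4,\dots,w_{2k}$ (which may themselves be endpoints of such $2$-paths) are part of the unknown earlier colouring $c$. Without a restriction the decoder can evaluate, the candidate set for the destroyed cycle reverts to roughly $\Delta^{2k-1}$, i.e.\ branching $\approx\Delta^{(2k-1)/(2k-2)}>D$ for $k=2$, and the counting comparison fails.

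The paper's resolution is to make the restriction \emph{static and vertex-based}: call $u$ dangerous for $v$ when $u$ and $v$ have at least $\kappa\Delta^{2/3}$ common neighbours (a symmetric relation depending only on $G$), and forbid the colours currently on vertices dangerous for $v$ — the same palette loss $\le\Delta(\Delta-1)/\ell$ by the same averaging. Then in any bichromatic cycle the algorithm ever creates, no two vertices at distance two along the cycle are dangerous for each other (they carry the same colour, so whichever was coloured later would have avoided it), and in particular the already-coloured pair $w_{2k-2},w_{2k}$ has fewer than $\kappa\Delta^{2/3}$ common neighbours. Hence every destroyed $2k$-cycle lies in a \emph{fixed} set $\C_{2k}(v)$ of size less than $\left(\Delta^{4/3}\sqrt{\kappa/2}\right)^{2k-2}$, which can be linearly ordered once and for all, so the recorded index is decodable with no circularity. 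If you replace your $F(v)$ by this dangerous-vertex rule, the rest of your outline goes through as written (note also that at the optimum you get equality $m=\frac{3\sqrt3}{2}D$ rather than strict inequality, and termination is rescued by the polynomial factor $t^{-3/2}$ from Lemma~\ref{lem:p_dyck}).
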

\begin{proof}
Fix a graph $G$ with maximum degree $\Delta$.
Without loss of generality, let $V(G) = \{1,\ldots,n\}$.
The main idea of the proof is as follows.

We first consider a randomized procedure that takes as input a partial acyclic
coloring of $G$ using $f(\Delta,\kappa)$ colors and tries to assign a random
color from a specifically restricted subset of $f(\Delta,\kappa)$ colors to the
smallest (with respect to its number) uncolored vertex $v$. If the partial
coloring extended by the coloring of $v$ is still a partial acyclic coloring of
$G$, then the procedure ends --- and thus this extended partial coloring is
kept. On the other hand, if the coloring
of $v$ creates a two-colored cycle, or if $v$ is assigned the same
color as one of its neighbors, then the procedure uncolors a specific subset of colored vertices
(which includes $v$) and then ends. This procedure is called \textbf{EXTEND}.

Next, we set up a procedure \textbf{LOG} that creates a compact record containing enough
information to be able to perform the following. Suppose we have a partial acyclic
coloring $c$ of $G$ with $f(\Delta,\kappa)$ colors. We execute EXTEND and obtain a new
partial acyclic coloring $c'$ of $G$.  Furthermore, let $x$ be the (randomly chosen)
color that EXTEND tried to assign to the smallest uncolored vertex $v$ in $c$.
The record constructed by LOG shall contain enough information that it is possible
to reconstruct both $c$ and $x$ from the record and $c'$.
Our aim is to create the record in such a way that, in an amortized sense, its
size is smaller than that of the list that EXTEND can choose the color $x$ from.

Finally, we consider the following randomized coloring algorithm. Start with an
empty coloring, that is, every vertex is uncolored in the initial partial
coloring. Then repeatedly execute the procedures EXTEND and LOG
until all the vertices of $G$
are assigned a color
in the current partial coloring.
One execution of EXTEND followed by one execution of LOG is called a
\emph{step} of the algorithm.

Note that the algorithm might never terminate. However, we show that the
probability that it actually does terminate, after sufficiently many steps, is
positive.  This will follow from the fact that after $t$ steps (for a
sufficiently large integer $t$), the number of ways how to $t$-times choose a
color in the procedure EXTEND will be (strictly) greater than the number of
all possible records corresponding to the executions that have not terminated
in $t$ steps times the number of all possible precolorings (recall our aim to
make the amortized size of a record small).

Let us now be precise.
For a vertex $v \in V(G)$, let $D(v)$ be the set of vertices $u \in V(G)$
different from $v$ such that the number of common neighbors of $u$ and $v$
is at least $\kappa \cdot \Delta^{2/3}$.  By symmetry, $u \in D(v) \iff v \in D(u)$.
A vertex $u \in D(v)$ is said to be \emph{dangerous for $v$}.

If $u$ and $v$ are dangerous for each other, then there are lots of
$4$-cycles containing both $u$ and $v$, namely $\Omega(\Delta^{4/3})$. This is why
the procedure EXTEND is designed in such a way that it never tries to assign to $v$ a
color that is currently assigned to a vertex that is dangerous for $v$.
Similarly, the procedure shall never try to assign to $v$ a color that is
currently assigned to one of the neighbors of $v$.
Formally, for a partial acyclic coloring $c$, we let
\begin{itemize}
\item $c[N(v)]$ be the set of colors assigned in $c$ to the neighbors of $v$;
\item $c[D(v)]$ be the set of colors assigned in $c$ to the vertices that are
dangerous for $v$; and
\item $L_c(v) \coloneqq \{1,2,\dots,f(\Delta,\kappa)\} \setminus \big(c[N(v)] \cup c[D(v)] \big)$.
\end{itemize}
Note that
$\abs{c[N(v)]} \le \Delta$. Moreover,
$\abs{c[D(v)]} \le \left({\Delta^{4/3}} - {\Delta^{1/3}}\right)/\kappa$.
Indeed, since the number of edges $\{w,w'\}$
with $w\in N(v)$ and $w'\in V(G)\setminus\{v\}$ is at most $\Delta(\Delta -1)$,
the size of $D(v)$ is at most
$\left({\Delta^{4/3}} - {\Delta^{1/3}}\right)/\kappa$.

Therefore, \[\abs{L_c(v)} \ge \frac32\sqrt{\frac{3\kappa}{2}}\cdot\Delta^{4/3}.\]
For the simplicity of our analysis, we shall always assume that
$\abs{L_c(v)} = \frac32\sqrt{\frac{3\kappa}{2}}\cdot\Delta^{4/3}$ (in the case of
having a strict inequality for some choice of $c$ and $v$, we simply remove
$\abs{L_c(v)} - \frac32\sqrt{\frac{3\kappa}{2}}\cdot\Delta^{4/3}$ colors from
$L_c(v)$ arbitrarily).

Next, for a vertex $v$ and an integer $k$, we give an upper bound on the number of $2k$-cycles
incident with $v$ that could become two-colored at some step of the execution
of the algorithm.
\begin{claim}
\label{cl:cyclecount}
For a vertex $v \in V(G)$ and an integer $k\ge2$, let $\C_{2k}(v)$ be the set of all
$2k$-cycles $W=v,w_2,w_3,\dots,w_{2k}$ incident with $v$ such that no two vertices at distance two on
$W$ are dangerous for each other. Then
\begin{equation*}
\abs{\C_{2k}(v)} < \left( \Delta^{4/3} \cdot \sqrt{\kappa/2} \right)^{2k-2}.
\end{equation*}
\end{claim}
\begin{proof}
We actually show that
\begin{equation*}
\abs{\C_{2k}(v)} < \frac\kappa2 \cdot \Delta^{2k-4/3}.
\end{equation*}
Since $\kappa \ge 2/\Delta^{2/3}$ and $k\ge2$, we have $\frac\kappa2 \cdot
\Delta^{2k-4/3} \le \left( \Delta^{4/3} \cdot \sqrt{\kappa/2} \right)^{2k-2}$
and the statement then follows.
First, there are at most $\binom\Delta2 < \Delta^2/2$ choices of $w_2$ and $w_{2k}$. Fix a choice
of $w_2$ and $w_{2k}$. Next, we fix one by one the vertices $w_3,
w_4,\dots,w_{2k-2}$; for each of them,
there are at most $\Delta -1 < \Delta$ choices.
Finally, since
$w_{2k-2}$ and $w_{2k}$ are not dangerous for each other, there are less
than $\kappa \cdot \Delta^{2/3}$ choices to choose
$w_{2k-1}$.
Combining all estimates together, we conclude that
\[
\abs{\C_{2k}(v)} < \frac\kappa2 \cdot \Delta^{2+2k-4+2/3} = \frac\kappa2 \cdot \Delta^{2k-4/3}.
\]
\end{proof}

The last bit that we need to describe the procedure EXTEND is to fix linear
orderings on the $2k$-cycles in $\C_{2k}(v)$ for every $v\in V(G)$ and
$k\ge2$. Fix $v$ and $k$, and
consider a $2k$-cycle $v,w_2,w_3,\dots, w_{2k}$ containing $v$. We define the
identifier of the cycle as follows: if $w_2 < w_{2k}$, then the identifier is
$w_2w_3\dots w_{2k}$; otherwise,
it is $w_{2k}w_{2k-1}\dots w_2$. The linear ordering $\O_{2k}(v)$ of the
elements of $\C_{2k}(v)$ is just given by the lexicographical ordering of
their identifiers.

Now we are ready to describe the procedure EXTEND. It takes as input a partial
acyclic coloring $c$, and outputs a new partial acyclic coloring $c'$. The
procedure is defined as follows.
\begin{itemize}
\item Let $v$ be the smallest uncolored vertex in $c$.

\item Pick a color $x$ uniformly at random from the list $L_c(v)$.

\item If the extension of $c$ obtained by assigning the color $x$ to $v$
is a partial acyclic coloring of $G$,
then we set $c'$ to be this extension.

\item Otherwise, let $\W$ be the set of all two-colored cycles in the extension of $c$.
Let $W\in\W$ be the $2k$-cycle that has the largest length and, subject
to that, the lexicographically smallest identifier $w_2w_3\dots w_{2k}$.
We set $c'$ to be the restriction of $c$ to the vertices $V\setminus\{w_4,w_5,\dots,w_{2k}\}$,
i.e., we uncolor the vertex set of $W$ except the two adjacent vertices $w_2$ and $w_3$.
\end{itemize}

We continue with the description of the procedure LOG. At the end of its
$t$-th execution, LOG outputs a record $R^t$ that is based on the previous record $R^{t-1}$ and
the coloring and possible uncolorings that happened during the $t$-th execution of
EXTEND. In order to make the analysis easier, we decompose $R^t$ into two parts $R^t_1$ and $R^t_2$
and analyse them separately. A record $R^t_1$ shall be a bit string that keeps track of all
colorings and uncolorings that have been performed during the first $t$ executions of EXTEND,
and a record $R^t_2$ shall be an integer that stores the information about the
$2k$-cycles that have been uncolored.

We thus define $R^t_1$ and $R^t_2$ recursively. For convenience, we let $R_{1}^{0}$ be the
empty string and $R_{2}^{0}\coloneqq0$. Now assume that $t\ge1$.
Let $v$ be the smallest uncolored vertex after the $(t-1)$-th execution of
EXTEND, so $v=1$ if $t=1$.
If the $t$-th execution of EXTEND assigns a color to
$v$ and keeps the extended colouring,
then we set $R^t_1$ to be $R^{t-1}_1$ to which we append one $0$, and $R^t_2 \coloneqq R^{t-1}_2$.
Otherwise, let $W$ be the $2k$-cycle uncolored during the $t$-th execution of EXTEND, and
let $z$ be the index of $W$ in $\C_{2k}(v)$ ordered according to $\O_{2k}(v)$.
Recall that $z$ is always an integer between $1$ and
$\max\sst{\abs{\C_{2k}(v)}}{v\in V(G)}$, which is at most
$\left\lfloor\left( \Delta^{4/3} \cdot \sqrt{\kappa/2}\right)^{2k-2}\right\rfloor$.
We let $R^t_1$ be $R^{t-1}_1$ to which we append one $0$ and $(2k-2)$ ones,
and we set
\[
R^t_2 \coloneqq R^{t-1}_2 \cdot \left\lfloor\left( \Delta^{4/3} \cdot \sqrt{\kappa/2}\right)^{2k-2}\right\rfloor + (z-1).
\]

Let us realize that the records $R^{t-1}_1$ and $R^{t-1}_2$ can be
reconstructed from the records $R^t_1$ and $R^t_2$.  Indeed, let $p$ be the
position of the last $0$ in $R^t_1$ and let $q$ be the number of ones after
this $0$, noting that $q$ might be equal to zero.  Then $R^{t-1}_1$ is equal
to the first $p-1$ elements of $R^t_1$ and $R^{t-1}_2$ is equal to
\[
\Bigg\lfloor {R^t_2 \Big/ \left\lfloor\left( \Delta^{4/3} \cdot
\sqrt{\kappa/2}\right)^{q}\right\rfloor} \Bigg\rfloor.
\]

Our next step is to show that the records $R^t_1$ and $R^t_2$ actually also contain
enough information to determine the set of uncolored vertices after $t$ steps
of the algorithm.
\begin{claim}\label{cl-prev}
For any positive integer $t$, the records
$R^t_1$ and $R^t_2$ determine the set $V_t$, defined to be the set
of uncolored vertices of $G$ after $t$ steps of the algorithm.
\end{claim}
\begin{proof}
We prove the statement by induction on the positive integer $t$.
If $t=1$, then necessarily $R^1_1$ is the list
containing only one zero, $R^1_2 = 0$, and $V_t = \{2,3,\dots,n\}$. Suppose now
that $t>1$.
As we observed above, $R^t_1$ and $R^t_2$ determine the records $R^{t-1}_1$
and $R^{t-1}_2$.
By the induction hypothesis, $R^{t-1}_1$ and $R^{t-1}_2$ determine $V_{t-1}$.
Therefore, we can find the smallest vertex $v$ in $V_{t-1}$,
which is the vertex that EXTEND attempts to color in the $t$-th step.

If $R^t_1$ is equal to $R^{t-1}_1$ with one $0$ appended, then coloring $v$
has not created any two-colored cycle and hence $V_t = V_{t-1}\setminus
\{v\}$. On the other hand, if $R^t_1$ is equal to $R^{t-1}_1$ with one $0$ and
$q$ ones appended, where $q\ge1$, then we set $z\coloneqq\left(R^t_2 \bmod
\left\lfloor\left( \Delta^{4/3} \cdot \sqrt{\kappa/2}\right)^{q}\right\rfloor
\right) + 1$ and we let $w_2w_3\dots w_{q+2}$ be the identifier of the $z$-th
element of $\C_{q+2}(v)$ according to $\O_{q+2}(v)$. Since this was the
$(q+2)$-cycle that was uncolored during the $t$-th execution of EXTEND, we deduce
that $V_t = V_{t-1}\setminus \{w_4,w_5,\dots,w_{q+2}\}$.
\end{proof}

Finally, we show that the records $R^t_1$ and $R^t_2$ together with the partial
coloring after $t$ steps fully determine the partial coloring after $t-1$
steps of the algorithm.
\begin{claim}
\label{cl:reconstruct}
Fix a positive integer $t$.
Let $c$ be the partial coloring of $G$ obtained after $t-1$ steps of
the algorithm, $c'$ the partial
coloring after $t$ steps, and $x$ the color that was used to color
the smallest uncolored vertex during the $t$-th execution of EXTEND.
Then $R^t_1$, $R^t_2$ and $c'$ determine both $x$ and $c$.
\end{claim}
\begin{proof}
Again, we prove the assertion by induction on the positive integer $t$.
If $t=1$, then $c'$ contains exactly
one colored vertex. Its color is $x$ and $c$ is indeed the empty coloring.

Let $t>1$. We first use $R^t_1$ and $R^t_2$ to determine the records
$R^{t-1}_1$ and $R^{t-1}_2$. Next, we utilize Assertion~\ref{cl-prev} and, using
$R^{t-1}_1$ and $R^{t-1}_2$, we determine the smallest uncolored vertex $v$
after the $(t-1)$-th step of the algorithm. Now, as in the proof of
Assertion~\ref{cl-prev},
the records $R^{t-1}_1, R^{t-1}_2, R^t_1$ and $R^t_2$ are used
to determine if the coloring of $v$ at the $t$-th execution of EXTEND has
created a two-colored cycle or not. In the former case, we also determine,
again in the same way as in the proof of Assertion~\ref{cl-prev}, the identifier $w_2w_3\dots
w_{2k}$ of the two-colored $2k$-cycle incident with $v$ that was uncolored by
EXTEND.

If there was no two-colored cycle, then clearly $x=c'(v)$ and $c$ can be
obtained from $c'$ by uncoloring the vertex $v$. On the other hand, if EXTEND
uncolored the $2k$-cycle with the identifier $w_2w_3\dots w_{2k}$, then we know that
$x=c'(w_3)$ and $c$ can be obtained by modifying $c'$ in the following way: we
color the vertices $w_4, w_6,\dots, w_{2k}$ with the color $c'(w_2)$, and the
vertices $w_5,w_7,\dots,w_{2k-1}$ with the color $c'(w_3)$.
\end{proof}

Before we continue the exposition and present our upper bounds on
the number of possible records that the procedure LOG can create, let us introduce
some additional notation. Again, we consider the situation just after
$t$ steps of the algorithm. For an integer $i\le t$, let $u_i$ be the number of
vertex-uncolorings that were performed during the $i$-th execution of EXTEND.
Specifically, if the coloring that was performed at the $i$-th execution did not
create any two-colored cycle, then $u_i=0$. On the other hand, if during this
execution EXTEND uncolored a two-colored $2k$-cycle, then $u_i=2k-2$. Next,
let $U_i \coloneqq \sum_{j=1}^t u_j$, that is, $U_i$ is the total number of vertex-uncolorings
that were performed from the beginning of the first step till the end of the
$i$-th step. Since
each execution of EXTEND performs exactly one vertex-coloring, it follows that $U_i \le
i$ for every $i\le t$. (In fact, one even sees that $U_i < i$.)

We are now ready to present the following two assertions which, assuming that the
algorithm has not colored the whole graph after $t$ steps, give upper bounds on
the number of possible records $R^t_1$ and $R^t_2$, respectively.
\begin{claim}
\label{cl:R1}
Let $\R^t_1$ be the set of all possible records $R^t_1$ that can be obtained by
performing $t$ steps of the algorithm that do not result in coloring the whole
graph $G$. Then there exists an absolute constant
$C$, depending only on $G$ and not on $t$, such that
\[
\abs{\R^t_1} \le C \cdot\frac{\left(3\sqrt{3}/2\right)^t}{t^{3/2}}.
\]
\end{claim}
\begin{proof}
Let $c$ be the partial coloring of $G$ obtained after $t$ steps of the
algorithm.
Assume that $c$ is not an acyclic coloring of the whole graph $G$.

By its definition, the record $R^t_1$ contains exactly $t$ zeros, and
for each $i\le t$, the $i$-th zero is followed by exactly $u_i$ ones.
Since $U_i \le i$ for all $i\le t$, the record $R^t_1$ is a partial Dyck word.
Thus the number of $1$'s in $R^t_1$ can be written as $t-r$ for some
non-negative integer $r$.  Further, the difference between the number of $0$'s
and the number of $1$'s in $R^t_1$ is equal to the number of colored vertices
in $c$, hence $r\le n-1$.
Therefore, it follows from Lemma~\ref{lem:p_dyck} that
\[
\abs{\R^t_1} \le \sum_{r=0}^{n-1} C_\dyck \cdot
\frac{\left(3\sqrt{3}/2\right)^{t+r}}{(t+r)^{3/2}}
\le \left( n \cdot C_\dyck \cdot \left(3\sqrt{3}/2\right)^{n-1} \right) \cdot
\frac{\left(3\sqrt{3}/2\right)^t}{t^{3/2}}.
\]
\end{proof}

\begin{claim}
\label{cl:R2}
For any positive integer $t$, the record $R^t_2$ is an integer satisfying
\[
0 \le R^t_2 \le \left( \Delta^{4/3} \cdot \sqrt{\kappa/2} \right)^{U_t} - 1.
\]
\end{claim}
\begin{proof}
We prove the statement by induction on $U_t$.
If $U_t=0$, then $R^t_2=0$. Assume now that $U_t > 0$. Let $i$ be the number of the step
where the $U_t$-th uncoloring occurs. Thus, during the $i$-th step, the
procedure EXTEND attempts to color a vertex $v$, which creates a two-colored
cycle. Let $\ell$ be the length of this cycle and $z$ its index in
$\C_{2k}(v)$ ordered by $\O_{2k}(v)$.
Assertion~\ref{cl:cyclecount} implies that the integer $z$ is at most $\left\lfloor\left(
\Delta^{4/3} \cdot \sqrt{\kappa/2}\right)^{\ell-2}\right\rfloor$.
Moreover, the induction hypothesis ensures that $R_{2}^{i-1}$ is an integer
satisfying
\[
0\le R^{i-1}_2 \le \left( \Delta^{4/3} \cdot \sqrt{\kappa/2}
\right)^{U_t-(\ell-2)} - 1.
\]
The conclusion follows, since
\begin{align*}
R^t_2&=R^{i-1}_2 \cdot \left\lfloor\left( \Delta^{4/3} \cdot
\sqrt{\kappa/2}\right)^{\ell-2}\right\rfloor + (z-1)\\
&\le \left(\Delta^{4/3} \cdot \sqrt{\kappa/2}\right)^{U_t} - \left\lfloor\left(
\Delta^{4/3} \cdot \sqrt{\kappa/2}\right)^{\ell-2}\right\rfloor + (z-1)\\
&\le \left( \Delta^{4/3} \cdot \sqrt{\kappa/2} \right)^{U_t} - 1.
\end{align*}
\end{proof}
\noindent
Since $R^t_2$ is always an integer and $U_t\le t$, we immediately deduce the following.
\begin{corollary}
\label{cor:R2}
For any positive integer $t$, the record
$R^t_2$ is an integer between $0$ and $\left\lfloor \left( \Delta^{4/3} \cdot \sqrt{\kappa/2} \right)^{t}\right\rfloor - 1$.
\end{corollary}

The only thing that remains to do in order to finish the proof of
Theorem~\ref{th-main} is to combine the assertions together. Let $C_\col$ be
the number of all possible partial
acyclic colorings of $G$ using $f(\Delta,\kappa)$ colors. So $C_\col
\le \left(f(\Delta,\kappa)+1\right)^n$.
Therefore, using Assertion~\ref{cl:R1} and Corollary~\ref{cor:R2}, we infer
that there are at most
\[
C_\col \cdot C \cdot \left(\left(3\sqrt{3}/2\right)^t \cdot t^{-3/2}\right) \cdot \left(\Delta^{4/3} \cdot \sqrt{\kappa/2}\right)^t
= \littleo{1} \cdot \left( \frac32\sqrt{\frac{3\kappa}2}\cdot\Delta^{4/3} \right)^t
\]
choices for a tuple $(c',R^t_1,R^t_2)$, where the $\littleo{1}$ term tends to
$0$ as $t$ tends to infinity.
On the other hand, by repeatedly applying Assertion~\ref{cl:reconstruct}, a tuple $(c',R^t_1,R^t_2)$
determines the (randomly chosen) color $x$ at the $i$-th step for every $i\le t$.
Therefore, assuming that the algorithm has not terminated after the $t$-th
step --- that is, there are still some uncolored vertices --- it had at most
$\littleo{1} \cdot \left( \frac32\sqrt{\frac{3\kappa}2}\cdot \Delta^{4/3} \right)^t$
possible ways how to choose the colors from the corresponding lists.
Hence, if $t$ is large enough, the algorithm terminates with a positive
probability --- in fact, this probability tends to $1$ as $t$ tends to infinity.

We conclude that
\[
\chi_a(G) \le f(\Delta,\kappa) = \frac32\sqrt{\frac{3\kappa}2}\Delta^{4/3} +
\left(\Delta^{4/3} - \Delta^{1/3}\right)/\kappa + \Delta,
\]
which finishes the proof.
\end{proof}

\end{document}